\newtheorem{prop}{Proposition}
\newtheorem{thm}[prop]{Theorem}
\newtheorem{lemma}[prop]{Lemma}
\newtheorem{defi}[prop]{Definition}
\theoremstyle{remark}
\newtheorem{rmk}[prop]{Remark}
\theoremstyle{definition}
\renewcommand{\P}{\mathbb{P}}
\newcommand{\E}{\mathbb{E}}
\renewcommand{\H}{\mathbb{H}}
\renewcommand{\L}{\mathbb{L}}
\newcommand{\LL}{\mathcal{L}}
\newcommand{\erre}{\mathbb{R}}
\newcommand{\m}{\bar{\mu}}
\newcommand{\ep}[1]{{#1}^\varepsilon}
\newcommand{\lip}{\dot{C}^{0,1}}
\newcommand{\ip}[2]{\langle#1,#2\rangle}
\newcommand{\bip}[2]{\big\langle#1,#2\big\rangle}
\title{Well-posedness for a class of dissipative stochastic
  evolution equations with Wiener and Poisson noise}
\author{Carlo Marinelli\thanks{Facolt\`a di Economia, Libera
    Universit\`a di Bolzano, I-39100 Bolzano, Italy.}}
\date{\normalsize 12 October 2011}
\begin{document}
\maketitle

\begin{abstract}
  We prove existence and uniqueness of mild and generalized solutions
  for a class of stochastic semilinear evolution equations driven by
  additive Wiener and Poisson noise. The non-linear drift term is
  supposed to be the evaluation operator associated to a continuous
  monotone function satisfying a polynomial growth condition. The
  results are extensions to the jump-diffusion case of the
  corresponding ones proved in \cite{cm:EJP10} for equations driven by
  purely discontinuous noise.
%
%
%
\end{abstract}

\section{Introduction}
The purpose of this note is to show that stochastic evolution equations
of the type
\begin{equation}
  \label{eq:musso}
  du(t) + Au(t)\,dt + f(u(t))\,dt = B(t)\,dW(t)
  + \int_Z G(z,t)\,\m(dz,dt),
  \qquad u(0)=u_0,
\end{equation}
where $A$ is a linear $m$-accretive operator on a Hilbert space $H$,
$f:\erre\to\erre$ a monotone increasing function of polynomial growth,
$W$ is a cylindrical Wiener noise on $H$, and $\m$ is a compensated
Poisson random measure, admit a unique mild solution. Precise
assumptions on the space on which the equation is considered and on
the data of the problem are given in the next section.

Global well-posedness of (\ref{eq:musso}) in the case of purely
discontinuous noise (i.e. with $B \equiv 0$) has been proved in
\cite{cm:EJP10} showing that solutions to regularized equations
converge to a process which solves the original equation. This is
achieved proving a priori estimates for the approximating processes by
rewriting the regularized stochastic equations as deterministic
evolution equations with random coefficients and using monotonicity
arguments. These a priori estimates essentially rely, in turn, on a
maximal inequality of Bichteler-Jacod type for stochastic convolutions
on $L_p$ spaces with respect to compensated Poisson random measures,
also proved in \cite{cm:EJP10}.

The well-posedness results of \cite{cm:EJP10} will be here extended to
the more general class of equation (\ref{eq:musso}). We shall adapt
the method used in \cite{cm:EJP10}, but instead of rewriting the
regularized (stochastic) equations as deterministic equations with
random coefficients, we shall rewrite them as stochastic equations
driven just by Wiener noise (we might say that, in a sense, we ``hide
the jumps''), the solutions of which will be shown to satisfy suitable
a priori estimates allowing to pass to the limit in the regularized
equations.

The result might be interesting even in the case of equations driven
only by a Wiener process (i.e. with $G \equiv 0$). In fact, the usual
approach to establish well-posedness for such equations
(cf. e.g. \cite{cerrai-libro,DZ92}) is to rewrite them as
deterministic equations with random coefficients and to consider them
on a Banach space of continuous functions. This approach requires the
stochastic convolution to have paths in such a space of continuous
functions. The latter condition is not needed in our setting.

\medskip

Let us conclude this introductory section with some words about
notation used throughout the paper: $a \lesssim b$ stands for $a \leq
Nb$ for some constant $N$ (if the constant $N$ depends on parameters
$p_1,\ldots,p_n$ we shall write $N(p_1,\ldots,p_n)$ and $a
\lesssim_{p_1,\ldots,p_n}$, respectively). For any $p \geq 0$, we set
$p^*:=p^2/2$.
Given two (metric) spaces $E$, $F$, we shall denote the space of
Lipschitz continuous functions from $E$ to $F$ by $\lip(E \to F)$.
The duality mapping of a Banach space $E$ with dual $E'$ and duality
form $\ip{\cdot}{\cdot}$ is the (multivalued) map $J:E \to 2^{E'}$,
$J: x \mapsto \{x^* \in E': \ip{x^*}{x}=\|x\|_E^2\}$.

\section{Main result}
Let $(\Omega,\mathcal{F},(\mathcal{F}_t)_{0 \leq t \leq T},\P)$, with
$T>0$ fixed, be a filtered probability space satisfying the usual
conditions, and let $\E$ denote expectation with respect to $\P$. All
stochastic elements will be defined on this stochastic basis, and any
equality or inequality between random quantities will be meant to hold
$\P$-almost surely.
Let $(Z,\mathcal{Z},m)$ be a measure space, $\m$ a Poisson measure on
$[0,T] \times Z$ with compensator $\mathrm{Leb} \otimes m$, where Leb
stands for Lebesgue measure. 
Let $D$ be an open bounded subset of $\erre^d$ with smooth boundary.
All Lebesgue spaces on $D$ will be denote without explicit mention of
the domain, e.g. $L_p:=L_p(D)$. We shall sometimes denote $L_2$ by
$H$.
Given $q \geq 1$ and a Banach space $E$, we shall denote the set of all
$E$-valued random variables $\xi$ such that $\E|\xi|^q<\infty$ by
$\mathbb{L}_q(E)$. We call $\H_p(E)$ the set of all adapted $E$-valued
processes such that
\[
\| u \|_{\H_p(E)} := 
\big( \E\sup_{t \leq T} \| u(t) \|_E^p \big)^{1/p} < \infty.
\]
For compactness of notation, we shall also write $\mathbb{L}_q$ in
place of $\mathbb{L}_q(L_q)$. and $\H_q$ in place of $\H_q(L_q)$. We
shall denote by $W$ a cylindrical Wiener process on $L_2(D)$.

Let $f:\erre \to \erre$ be a monotone increasing function with
$f(0)=0$, for which there exists $p \geq 2$ such that $|f(r)| \lesssim
1+|r|^{p/2}$ for all $r \in \erre$.

Let $A$ be a linear (unbounded) $m$-accretive operator in the spaces
$H=L_2$, $E:=L_p$ and $L_{p^*}$, and assume that $S$, the strongly
continuous semigroup generated by $-A$ on $E$, is analytic. We shall
not distinguish notationally the different realizations of $A$ and $S$
on the above spaces.

Denoting by $\gamma(H \to E)$ the space of $\gamma$-radonifying
operators from $H$ to $E$, for any $q \geq 1$, the class of adapted
processes $B: [0,T] \to \mathcal{L}(H \to E)$ such that
\[
\|B\|^q_{\LL^\gamma_q} := \E \int_0^T \|B(t)\|^q_{\gamma(H \to E)}\,dt < \infty
\]
will be denoted by $\LL^\gamma_q(H \to E)$. Similarly, denoting the
predictable $\sigma$-algebra by $\mathcal{P}$ and the Borel
$\sigma$-algebra of $\erre^d$ by $\mathcal{B}(\erre^d)$, the space of
$\mathcal{P} \otimes \mathcal{Z} \otimes
\mathcal{B}(\erre^d)$-measurable processes $g:[0,T] \times Z \times D
\to \erre$ such that
\[
\|g\|_{\LL^m_q}^q := 
\E\int_0^T\!\!\int_Z \|g(t,z)\|^q_{L_q}\,m(dz)\,dt +
\E\int_0^T\!\!\Big(\int_Z \|g(t,z)\|_{L_q}^2\,m(dz)\Big)^{q/2}\,dt < \infty
\]
will be denoted by $\LL^m_q$. It was proved in \cite{cm:EJP10} that,
for any strongly continuous semigroup of positive contractions $R$ on
$L_q$, $q \in [2,\infty[$, one has the maximal inequality
\begin{equation}     \label{eq:BJ}
\E \sup_{t \leq T} \Big\| 
\int_0^t\!\!\int_Z R(t-s) g(s,z)\,\m(ds,dz) \Big\|^q_{L_q}
\lesssim \|g\|_{\LL^m_q}^q.
\end{equation}

Let us now define mild and generalized solutions of (\ref{eq:musso}).
\begin{defi}
  Let $u_0$ be an $H$-valued $\mathcal{F}_0$-measurable random
  variable. A (strongly) measurable adapted $H$-valued process $u$ is
  a mild solution to (\ref{eq:musso}) if, for all $t \in [0,T]$,
  \begin{multline*}
  u(t) + \int_0^t S(t-s)f(u(s))\,ds \\= S(t)u_0 
  + \int_0^t S(t-s)B(s)\,dW(s)
  + \int_0^t\!\!\int_Z S(t-s)G(s,z)\,\m(dz,ds)
  \end{multline*}
  and all integrals are well-defined.
\end{defi}
As is well known, the stochastic convolution with respect to $W$ is
well-defined if the operator $Q_t$ is nuclear for all $t \in [0,T]$,
where
\[
Q_t := \int_0^t S(t-s)B(s)B^*(s)S^*(t-s)\,ds.
\]
This condition is verified, for instance, if $B \in \LL^\gamma_2$, i.e. if
\[
\E \int_0^T \|B(s)\|^2_{\gamma(H \to H)}\,ds < \infty
\]
(recall that $\gamma(H \to H)$ is just the space of Hilbert-Schmidt
operators from $H$ to itself).
Similarly, the stochastic convolution with respect to $\m$ is
well-defined if $G \in \LL^m_2$, i.e. if
\[
\E\int_0^T\!\!\int_Z \|G(s,z)\|^2_{L_2}\,m(dz)\,ds < \infty.
\]
The deterministic convolution term is well-defined if $f(u) \in
L_1([0,T] \to H)$, or if $u \in L_{p/2}([0,T] \to L_p)$.

\begin{defi}
  A process $u \in \mathbb{H}_2(T)$ is a generalized solution to
  (\ref{eq:musso}) if there exist $\{u_{0n}\}_n \subset \L_p$,
  $\{B_n\}_n \subset \LL^\gamma_p$, $\{G_n\}_n \subset
  \mathcal{L}^m_{p^*}$, and $\{u_n\}_n \subset \mathbb{H}_2(T)$ such
  that $u_{0n} \to u_0$ in $\L_2$, $B_n \to B$ in $\LL^\gamma_2$, $G_n
  \to G$ in $\LL^m_2$ and $u_n \to u$ in $\mathbb{H}_2(T)$ as $n \to
  \infty$, where $u_n$ is the (unique) mild solution of
  \[
  du_n(t) + Au_n(t)\,dt + f(u_n(t))\,dt = B_n(s)\,dW(t)
  + \int_Z G_n(z)\,\m(dt,dz),
  \qquad u_n(0)=u_{0n}.
  \]
\end{defi}

Here are the results, which will be proved in the next sections.
\begin{thm}     \label{thm:main}
  Assume that $u_0 \in \L_p$, $B \in \LL^\gamma_p$ and $G \in
  \LL^m_{p^*}$. Then there exists a unique c\`adl\`ag mild solution $u
  \in \H_2$ to equation (\ref{eq:musso}) such that $f(u) \in L^1[0,T]
  \to H)$.
\end{thm}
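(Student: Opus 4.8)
The plan is to follow the regularization scheme announced in the introduction: replace the monotone nonlinearity $f$ by its Yosida approximations $f_\lambda$ (which are Lipschitz, monotone, with linear growth), solve the regularized equation, derive a priori estimates uniform in $\lambda$, and pass to the limit. The twist, as the author indicates, is that rather than treating the regularized equation pathwise as a random PDE, one subtracts off the stochastic convolutions and works with a genuinely stochastic equation driven by $W$ alone. Concretely, let $W_A(t):=\int_0^t S(t-s)B(s)\,dW(s)$ and $G_A(t):=\int_0^t\!\!\int_Z S(t-s)G(s,z)\,\m(dz,ds)$; by the assumptions $B\in\LL^\gamma_p$, $G\in\LL^m_{p^*}$, together with the Bichteler-Jacod maximal inequality \eqref{eq:BJ} (applied on $L_p$) and the corresponding Da Prato-Kwapie\'n-Zabczyk-type factorization bound for $W_A$ on $L_p$, both $W_A$ and $G_A$ belong to $\H_p(E)=\H_p$. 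Writing $v_\lambda:=u_\lambda - W_A - G_A$, the regularized mild equation becomes the (random) mild equation $v_\lambda(t)+\int_0^t S(t-s)f_\lambda\big(v_\lambda(s)+W_A(s)+G_A(s)\big)\,ds = S(t)u_0$, which for fixed $\omega$ is a deterministic semilinear equation with a Lipschitz, monotone perturbation of $-A$ and a continuous forcing; existence and uniqueness of a mild solution $v_\lambda\in C([0,T]\to H)$ is then classical, and measurability in $\omega$ follows from the fixed-point construction.

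The core of the argument is the a priori estimate: $\|u_\lambda\|_{\H_2}$ bounded uniformly in $\lambda$, and more importantly $\|f_\lambda(u_\lambda)\|_{L_1([0,T]\to H)}$ (or, better, $u_\lambda$ bounded in $L_{p/2}([0,T]\to L_p)$, equivalently $\E\int_0^T\|u_\lambda(t)\|_{L_p}^{p/2}\,dt\lesssim 1$). To get this I would test the equation for $v_\lambda$ — now I can legitimately apply It\^o's formula / the chain rule because $v_\lambda$ solves a deterministic equation for each $\omega$ — against $J(v_\lambda)$, the duality map of $E=L_p$, or more simply compute $\frac{d}{dt}\|v_\lambda(t)\|_H^2$ and $\frac{d}{dt}\|v_\lambda(t)\|_{L_p}^p$. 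Using $m$-accretivity of $A$ on $H$ and on $L_p$ one discards the $Av_\lambda$ terms; the crucial term is $\int_D f_\lambda(v_\lambda+W_A+G_A)\,|v_\lambda|^{p-2}v_\lambda\,dx$. Here one writes $v_\lambda = (v_\lambda + W_A+G_A) - (W_A+G_A)$, uses monotonicity of $f_\lambda$ and $f_\lambda(0)=0$ to get $f_\lambda(w)w \gtrsim |w|\,|f_\lambda(w)| $-type lower bounds exploiting the polynomial growth $|f_\lambda(r)|\lesssim 1+|r|^{p/2}$, and absorbs the cross terms involving $W_A+G_A$ using Young's inequality and the fact that $W_A+G_A\in\H_p$. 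This yields a bound of the form $\|v_\lambda\|_{\H_2}^2 + \E\int_0^T\|v_\lambda(t)\|_{L_p}^{p/2}\,dt \lesssim 1 + \|u_0\|_{\L_2}^2 + \|W_A+G_A\|_{\H_p}^p$, uniformly in $\lambda$, which transfers to $u_\lambda$.

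With uniform bounds in hand, the convergence step is: $v_\lambda \to v$ in $\H_2$. One shows $\{v_\lambda\}$ is Cauchy by subtracting the equations for $v_\lambda$ and $v_\mu$, testing the difference with $J_H(v_\lambda - v_\mu)$, and using monotonicity of $f_\lambda$ together with the standard estimate $|f_\lambda(r) - f_\mu(r)| \le (\lambda+\mu)|f_\mu(r)|\cdot(\text{something})$ controlled by the $L_{p/2}([0,T]\to L_p)$ bound — i.e., $f_\lambda(u_\lambda) - f_\mu(u_\mu) = [f_\lambda(u_\lambda)-f_\lambda(u_\mu)] + [f_\lambda(u_\mu) - f_\mu(u_\mu)]$, the first part monotone, the second part $\to 0$. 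Then $u:=v+W_A+G_A\in\H_2$, $f(u)\in L_1([0,T]\to H)$ by lower semicontinuity and the uniform estimate via Fatou, one checks $f_\lambda(u_\lambda)\to f(u)$ strongly enough to pass to the limit in the mild formulation (the deterministic convolution $S\ast f_\lambda(u_\lambda)$ converges in $\H_2$ since $S$ is a contraction semigroup on $H$), and c\`adl\`ag regularity of $u$ is inherited from $G_A$ (c\`adl\`ag) plus continuity of $v$ and $W_A$. Uniqueness is the same monotonicity test between two mild solutions. The main obstacle I anticipate is the a priori estimate: making rigorous the chain rule for $\|v_\lambda(t)\|_{L_p}^p$ when $v_\lambda$ is only a mild (not strong) solution — this typically requires an additional inner Yosida/resolvent regularization $A\rightsquigarrow A(I+\varepsilon A)^{-1}$, deriving the estimate there, and letting $\varepsilon\to 0$ — and, in parallel, verifying that $W_A\in\H_p$, which is exactly where analyticity of $S$ on $E=L_p$ and the factorization method enter.
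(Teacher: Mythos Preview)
Your decomposition subtracts \emph{both} stochastic convolutions, setting $v_\lambda=u_\lambda-W_A-G_A$ and reducing to a random deterministic equation. This is precisely the approach the paper is designed to improve upon, and it does not give the theorem under the stated hypothesis $B\in\LL^\gamma_p$. The issue is in the a priori $L_p$ estimate: testing $v_\lambda$ against $|v_\lambda|^{p-2}v_\lambda$ and using monotonicity leaves the term $\bigl\langle f_\lambda(W_A+G_A),\,|v_\lambda|^{p-2}v_\lambda\bigr\rangle$, which by Young's inequality is controlled by $\|f_\lambda(W_A+G_A)\|_{L_p}^p$. Since $|f_\lambda(r)|\lesssim 1+|r|^{p/2}$, this requires $W_A+G_A\in L_{p^*}$ with $p^*$-th moments, hence $B\in\LL^\gamma_{p^*}$, not merely $B\in\LL^\gamma_p$. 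Your claim that ``$W_A+G_A\in\H_p$'' suffices to absorb the cross terms is therefore wrong; and the alternative estimate you write, with $\E\int_0^T\|v_\lambda\|_{L_p}^{p/2}\,dt$ on the left-hand side, cannot be produced either, because $f$ is only assumed to have polynomial \emph{upper} growth, so the term $\int f_\lambda(u_\lambda)u_\lambda$ gives no coercivity in $L_p$.

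The paper's key idea---explicitly flagged in the Remark following the statement---is to subtract only $G_A$, so that $v_\lambda:=u_\lambda-G_A$ solves a genuinely stochastic equation $dv_\lambda+Av_\lambda\,dt+f_\lambda(v_\lambda+G_A)\,dt=B\,dW$. One then applies It\^o's formula for $\|\cdot\|_{L_p}^p$ (after the resolvent regularisation you correctly anticipated) to this \emph{stochastic} equation. The Wiener contribution then enters through the martingale term and the It\^o correction, both of which are bounded by $\|B\|_{\LL^\gamma_p}^p$ via Burkholder-type estimates, \emph{not} through $\|f_\lambda(W_A)\|_{L_p}^p$. The cross term that remains involves only $f_\lambda(G_A)$, and this is where the assumption $G\in\LL^m_{p^*}$ is actually used. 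In short: hiding only the jumps, and treating the Wiener part stochastically, is the whole point of the argument and is what lets one replace $p^*$ by $p$ in the hypothesis on $B$.
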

\begin{thm}     \label{thm:gen}
  Assume that $u_0 \in \L_2$, $B \in \LL^\gamma_2$, $G \in
  \LL^m_2$. Then there exists a unique generalized solution to
  equation (\ref{eq:musso}).
\end{thm}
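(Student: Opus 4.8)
The plan is to obtain the generalized solution as a limit of mild solutions to the regularized problems, using Theorem \ref{thm:main} to produce the approximating sequence and a Cauchy-in-$\H_2$ estimate to pass to the limit. First I would fix data $u_0 \in \L_2$, $B \in \LL^\gamma_2$, $G \in \LL^m_2$ and choose, by density, sequences $\{u_{0n}\}_n \subset \L_p$, $\{B_n\}_n \subset \LL^\gamma_p$, $\{G_n\}_n \subset \LL^m_{p^*}$ with $u_{0n}\to u_0$ in $\L_2$, $B_n\to B$ in $\LL^\gamma_2$, $G_n\to G$ in $\LL^m_2$; such sequences exist because $\L_p$, $\LL^\gamma_p$, $\LL^m_{p^*}$ are dense in the corresponding index-$2$ spaces (one may truncate and mollify, using that $D$ is bounded and that the relevant integrands can be approximated by bounded ones). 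By Theorem \ref{thm:main}, for each $n$ there is a unique c\`adl\`ag mild solution $u_n \in \H_2$ of the regularized equation with data $(u_{0n},B_n,G_n)$, and moreover $f(u_n)\in L^1([0,T]\to H)$.

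The core step is to show that $\{u_n\}_n$ is Cauchy in $\H_2(T)$. For $n,k$ write $w := u_n - u_k$; then $w$ is the mild solution of the deterministic-type equation $dw + Aw\,dt + (f(u_n)-f(u_k))\,dt = 0$ with initial datum $u_{0n}-u_{0k}$ and forcing coming from $(B_n - B_k)\,dW + \int_Z (G_n - G_k)\,\m(dt,dz)$. Subtracting the stochastic convolutions $\zeta_{nk}$ (with respect to both $W$ and $\m$) and setting $v := w - \zeta_{nk}$, the process $v$ solves a genuine (pathwise) evolution equation $v' + Av + f(u_n) - f(u_k) = 0$, $v(0) = u_{0n}-u_{0k}$, to which one applies the $m$-accretivity of $A$ in $H$ together with the monotonicity of $f$: testing with the duality map $J(v)$ one gets
\[
\frac12\frac{d}{dt}\|v(t)\|_H^2 + \bip{f(u_n(t))-f(u_k(t))}{u_n(t)-u_k(t)}
\le \bip{f(u_n(t))-f(u_k(t))}{\zeta_{nk}(t)},
\]
and the second term on the left is $\ge 0$ by monotonicity of $f$. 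The right-hand side is controlled, using the polynomial growth $|f(r)|\lesssim 1+|r|^{p/2}$ and Hölder, by a quantity like $(1+\|u_n\|_{L_p}^{p/2}+\|u_k\|_{L_p}^{p/2})\|\zeta_{nk}\|_{L_p}$; here I would invoke the a priori $L_p$-bounds on $u_n$ that are available from the proof of Theorem \ref{thm:main} (the regularized equations satisfy uniform estimates in $\H_p$ in terms of the data norms, hence uniformly in $n$ since $B_n,G_n,u_{0n}$ stay bounded in the index-$2$ norms while their index-$p$ norms need not, so care is needed — see the obstacle below). Gronwall in $t$ then yields
\[
\|u_n - u_k\|_{\H_2(T)}^2 \lesssim \|u_{0n}-u_{0k}\|_{\L_2}^2 + \|\zeta_{nk}\|_{\H_2(H)}^2 + (\text{cross terms}),
\]
and the stochastic-convolution terms go to zero because $B_n\to B$ in $\LL^\gamma_2$ (use the maximal inequality for the Wiener convolution on $H$, which is standard for analytic $S$ on a Hilbert space) and $G_n\to G$ in $\LL^m_2$ (use \eqref{eq:BJ} with $q=2$ and $R=S|_{L_2}$, noting $S$ is a semigroup of positive contractions on $L_2$). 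Hence $u_n \to u$ in $\H_2(T)$ for some $u$, which is by construction a generalized solution.

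Uniqueness is the easier half: if $u$ and $\tilde u$ are two generalized solutions with approximating sequences $(u_n)$, $(\tilde u_n)$, one can either compare the two limit processes directly or, better, observe that the Cauchy estimate above is in fact a stability estimate showing the limit depends only on $(u_0, B, G)$ and not on the chosen approximating sequence; concatenating $(u_n)$ and $(\tilde u_n)$ into a single sequence and applying the estimate forces $u = \tilde u$ in $\H_2(T)$.

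\medskip

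The main obstacle I anticipate is the treatment of the cross term $\bip{f(u_n)-f(u_k)}{\zeta_{nk}}$ and, relatedly, whether one genuinely has a \emph{uniform} bound on $\|u_n\|_{\H_p}$ (or at least on $\|u_n\|_{L_{p/2}([0,T]\to L_p)}$). The data $(u_{0n}, B_n, G_n)$ converge only in the weak (index-$2$) norms, so their index-$p$/$p^*$ norms may blow up, and then the $\H_p$-estimates from Theorem \ref{thm:main} degrade. The resolution is presumably \emph{not} to use the full $L_p$ machinery here but to absorb the cross term differently: since $f$ is monotone, one should exploit that $\bip{f(u_n) - f(u_k)}{\zeta_{nk}} = \bip{f(u_n)-f(u_k)}{(u_n - u_k) - v}$ and that the first piece has a good sign, and then use an integrated (in $t$) form with the regularizing effect of the analytic semigroup $S$ to trade the $L_p$-norm of $\zeta_{nk}$ against its $\H_2$-norm at the price of a time-singular but integrable kernel — exactly the $L_2$-level argument, avoiding any uniform $\H_p$-bound. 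Getting this trade-off clean (i.e. showing the regularized problems are well-posed and stable purely at the $\H_2$ level, using $L_p$ only qualitatively to make $f(u_n)$ integrable) is the delicate point, and it is precisely where the ``hide the jumps'' rewriting and the Hilbert-space monotonicity structure do the work.
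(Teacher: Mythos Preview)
Your overall plan (approximate, prove a Cauchy estimate in $\H_2$, pass to the limit, deduce uniqueness from independence of the approximating sequence) matches the paper exactly. The gap is in the Cauchy estimate. By subtracting the stochastic convolution $\zeta_{nk}$ and testing the reduced equation for $v=w-\zeta_{nk}$, you pair $f(u_n)-f(u_k)$ against $v$ rather than against $w=u_n-u_k$, and this is what produces the cross term $\bip{f(u_n)-f(u_k)}{\zeta_{nk}}$. That term genuinely cannot be controlled here: $\zeta_{nk}$ is small only in $\H_2(H)$ (since $B_n-B_k\to 0$ merely in $\LL^\gamma_2$ and $G_n-G_k\to 0$ merely in $\LL^m_2$), while $f(u_n)-f(u_k)$ needs uniform $L_p$ control on $u_n$ to be bounded in $L_2$, and, as you correctly note, no such uniform bound is available. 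Your proposed fix is circular: rewriting $\zeta_{nk}=(u_n-u_k)-v$ just returns the good-sign term you already moved to the left and leaves $\bip{f(u_n)-f(u_k)}{-v}$, which is the original pairing. Analytic smoothing of $S$ cannot manufacture $L_p$ smallness of $\zeta_{nk}$ from $L_2$ data.

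The paper avoids this entirely by \emph{not} subtracting the stochastic convolutions. One applies It\^o's formula (after regularization by $(I+\varepsilon A)^{-1}$) directly to $\|u_1-u_2\|_H^2$, where $u_1-u_2$ solves the \emph{stochastic} difference equation. Then the drift pairing is exactly $\bip{f(u_1)-f(u_2)}{u_1-u_2}\geq 0$ with no remainder, the $A$-term is nonnegative by accretivity, and the only surviving terms on the right are the It\^o corrections, which are precisely $\|B_1-B_2\|_{\LL^\gamma_2}^2$ and $\|G_1-G_2\|_{\LL^m_2}^2$ (plus martingale terms handled by BDG). This yields the clean stability estimate (\ref{eq:genlip}) with constants independent of any $L_p$ norms of the data, and the rest of your argument then goes through verbatim.
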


\begin{rmk}
  By inspection of the corresponding proof in \cite{cm:EJP10}, it is
  clear that the same argument applies to Theorem \ref{thm:main} if
  one assumes $B \in \LL^\gamma_{p^*}$, i.e.
  \[
  \E \sup_{t \leq T} \| W_A(t) \|_{L_{p^*}}^{p^*} < \infty.
  \]
  In the proof of Theorem \ref{thm:main} below we show that $B \in
  \LL^\gamma_{p^*}$ is too strong an assumption, and that $B \in
  \LL^\gamma_p$ is enough. It is natural to conjecture that also $G
  \in \LL^m_{p^*}$ is too strong, and it should suffice to assume $G
  \in \LL^m_p$. Unfortunately, thus far we have not been able to
  replace the exponent $p^*$ by $p$ in the hypotheses on $G$ of
  Theorem \ref{thm:main}.
\end{rmk}

\section{Proofs}
\subsection{Proof of Theorem \ref{thm:main}}
Let $f_\lambda:=\lambda^{-1}(I-(I+\lambda f)^{-1})$, $\lambda>0$, be
the Yosida approximation of $f$, and recall that $f_\lambda \in
\dot{C}^{0,1}(\erre)$, with $\|f_\lambda\|_{\lip} \leq 2/\lambda$.
Let us consider the regularized equation
\begin{equation}     \label{eq:reg}
du_\lambda(t) + Au_\lambda(t)\,dt + f_\lambda(u_\lambda(t))\,dt = B(t)\,dW(t)
+ \int_Z G(z,t)\,\m(dz,dt),
\qquad u_\lambda(0)=u_0.
\end{equation}
Assuming that $B \in \LL^\gamma_p$ and $G \in \LL^m_p$, one could
prove by a fixed point argument that (\ref{eq:reg}) admits a unique
c\`adl\`ag mild $E$-solution (by which we mean, here and in the
following, a mild solution with values in $E$). However, we prefer to
proceed in a less direct way, for reasons that will become apparent
later. In particular, we ``hide the jumps'' in (\ref{eq:reg}) writing
an equation for the difference between $u_\lambda$ and the stochastic
convolution with respect to the Poisson random measure as follows:
setting, for notational compactness,
\[
W_A(t) := \int_0^t S(t-s)B(s)\,dW(s),
\qquad
G_A(t) := \int_0^t\!\!\int_Z S(t-s)G(s,z)\,\m(ds,dz),
\]
the integral form of (\ref{eq:reg}) reads
\begin{equation}     \label{eq:regm}
u_\lambda(t) + \int_0^t S(t-s)f_\lambda(u_\lambda(s))\,ds
= S(t)u_0 + W_A(t) + G_A(t),
\end{equation}
which can be equivalently written as
\[
u_\lambda(t)-G_A(t) + \int_0^t S(t-s)f_\lambda(u_\lambda(s)-G_A(s)+G_A(s))\,ds
= S(t)u_0 + W_A(t),
\]
hence also, setting $v_\lambda:=u_\lambda-G_A$ and
$\tilde{f}_\lambda(t,y):=f_\lambda(y+G_A(t))$, for $y \in \erre$ and
$t \geq 0$, as
\[
v_\lambda(t) + \int_0^t S(t-s)\tilde{f}_\lambda(v_\lambda(s))\,ds
= S(t)u_0 + W_A(t),
\]
which is the mild form of
\begin{equation}     \label{eq:vl}
dv_\lambda(t) + Av_\lambda(t)\,dt + \tilde{f}_\lambda(t,v_\lambda(t))\,dt
= B(t)\,dW(t), 
\qquad v_\lambda(0)=u_0.
\end{equation}
It is clear that $v_\lambda$ is a mild $E$-solution of (\ref{eq:vl})
if and only if $v_\lambda+G_A$ is a mild $E$-solution of (\ref{eq:reg}).

In the next Proposition we show that (\ref{eq:vl}) admits a unique
mild $E$-solution $v_\lambda$, hence identifying also the unique $E$-mild
solution of (\ref{eq:reg}).
\begin{prop}
  If $u_0 \in \L_p$, $B \in \LL^\gamma_p$ and $G \in \LL^m_p$, then
  equation (\ref{eq:vl}) admits a unique c\`adl\`ag mild $E$-solution
  $v_\lambda \in \H_p$. Therefore equation (\ref{eq:reg}) admits a
  unique c\`adl\`ag mild $E$-solution $u_\lambda \in \H_p$, and
  $u_\lambda=v_\lambda+G_A$.
\end{prop}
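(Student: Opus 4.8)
The plan is to solve \eqref{eq:vl} by a fixed-point argument on the space $\H_p$, exploiting that, once we subtract off the Wiener convolution and the semigroup acting on the initial datum, the remaining equation is a deterministic (pathwise) evolution equation with a globally Lipschitz, monotone nonlinearity. First I would record that, under the hypothesis $B \in \LL^\gamma_p$ and the analyticity of $S$ on $E$, the stochastic convolution $W_A$ belongs to $\H_p(E)$: this is the Wiener-analogue of the Bichteler--Jacod inequality \eqref{eq:BJ} and follows from standard maximal-inequality estimates for stochastic convolutions on $L_p$-spaces (factorization method plus the analyticity of $S$). Similarly $G \in \LL^m_p$ together with \eqref{eq:BJ} gives $G_A \in \H_p(E)$; hence $t \mapsto S(t)u_0 + W_A(t) + G_A(t)$ is a well-defined process in $\H_p$ whose càdlàg modification we fix once and for all.

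Next I would set up the map $\Phi$ sending $w \in \H_p$ to the mild solution of the \emph{linear} inhomogeneous equation obtained by freezing the nonlinear term at $w$:
\[
(\Phi w)(t) := S(t)u_0 + W_A(t) + G_A(t) - \int_0^t S(t-s)\tilde f_\lambda(s,w(s))\,ds.
\]
Since $f_\lambda \in \lip(\erre)$ with $\|f_\lambda\|_{\lip}\le 2/\lambda$, the shifted nonlinearity $\tilde f_\lambda(t,y)=f_\lambda(y+G_A(t))$ is Lipschitz in $y$ uniformly in $(t,\omega)$, with the \emph{same} constant $2/\lambda$; the evaluation operator it induces therefore maps $\H_p$ into itself (here one uses $f_\lambda(0)=0$ and $G_A \in \H_p$ to control the affine part). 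One then estimates, for $w_1,w_2 \in \H_p$,
\[
\sup_{t\le T_0}\Big\| \int_0^t S(t-s)\big(\tilde f_\lambda(s,w_1(s)) - \tilde f_\lambda(s,w_2(s))\big)\,ds \Big\|_E
\;\lesssim_\lambda\; \int_0^{T_0} \|w_1(s)-w_2(s)\|_E\,ds,
\]
using that $S$ is a contraction semigroup on $E=L_p$ (it is the realization of an $m$-accretive $A$), so that $\|\Phi w_1 - \Phi w_2\|_{\H_p(T_0)} \le (2T_0/\lambda)\,\|w_1-w_2\|_{\H_p(T_0)}$ on a short interval $[0,T_0]$ with $2T_0/\lambda < 1$. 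This gives a unique mild $E$-solution on $[0,T_0]$; since the contraction constant depends only on $\lambda$ and $T_0$, not on the size of the data, the solution is extended to all of $[0,T]$ by iterating on consecutive intervals of length $T_0$. Càdlàg regularity of $v_\lambda$ is then inherited from the càdlàg regularity of $W_A + G_A$ and the continuity of the deterministic convolution term.

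The main obstacle I anticipate is not the contraction estimate itself but verifying cleanly that all objects live in the right $L_p$-based spaces with càdlàg paths: specifically, that $W_A \in \H_p(E)$ and $G_A \in \H_p(E)$ with càdlàg modifications, and that the deterministic convolution $\int_0^t S(t-s)\tilde f_\lambda(s,v_\lambda(s))\,ds$ is genuinely $E$-valued and continuous in $t$ — this is where analyticity of $S$ on $E$ and the polynomial growth bound on $f$ (hence on $f_\lambda$, uniformly in $\lambda$ only through the affine estimate, which suffices here since $\lambda$ is fixed) are used. Once the Proposition is established, the final assertion is immediate: by the equivalence noted before the statement, $v_\lambda$ is a mild $E$-solution of \eqref{eq:vl} if and only if $u_\lambda := v_\lambda + G_A$ is a mild $E$-solution of \eqref{eq:reg}, and both uniqueness and membership in $\H_p$ transfer directly since $G_A \in \H_p$.
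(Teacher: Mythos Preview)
Your approach is essentially the paper's: a contraction fixed-point argument on $\H_p$, using the uniform Lipschitz bound $\|f_\lambda\|_{\lip}\le 2/\lambda$ for $\tilde f_\lambda$, the maximal inequality for $W_A$, the bound $G_A\in\H_p$ from \eqref{eq:BJ} to control the affine part of $\tilde f_\lambda$, and extension by iterating on short intervals.

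There is, however, one slip that would make the argument fail as written. Your map $\Phi$ should \emph{not} contain the term $G_A(t)$. Equation \eqref{eq:vl} is obtained precisely by subtracting $G_A$ from $u_\lambda$, so its mild form is
\[
v_\lambda(t) = S(t)u_0 + W_A(t) - \int_0^t S(t-s)\tilde f_\lambda(s,v_\lambda(s))\,ds,
\]
and the correct fixed-point map is the paper's $\mathfrak{F}(\phi)(t)=S(t)u_0 + W_A(t) - \int_0^t S(t-s)\tilde f_\lambda(s,\phi(s))\,ds$. With $G_A$ reinserted as you have it, the fixed point of $\Phi$ would satisfy
\[
w(t) = S(t)u_0 + W_A(t) + G_A(t) - \int_0^t S(t-s) f_\lambda\big(w(s)+G_A(s)\big)\,ds,
\]
which is neither \eqref{eq:vl} nor \eqref{eq:reg} (the latter has $f_\lambda(u_\lambda)$, not $f_\lambda(u_\lambda+G_A)$). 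The contraction estimate is unaffected since $G_A$ cancels in $\Phi w_1-\Phi w_2$, but the fixed point is the wrong object. Once you drop $G_A$ from $\Phi$, your argument matches the paper's; $G_A\in\H_p$ is still needed, exactly where you use it, to bound $\tilde f_\lambda(t,0)=f_\lambda(G_A(t))$.
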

\begin{proof}
  We use a fixed point argument on the space $\H_p$. Let us consider
  the operator
  \[
  \mathfrak{F}: \H_p \ni \phi \mapsto \Big(
  t \mapsto S(t)u_0 - \int_0^t S(t-s)\tilde{f}_\lambda(s,\phi(s))\,ds
  + W_A(t) \Big).
  \]
  We shall prove that $\mathfrak{F}$ is a contraction on $\H_p$, if
  $T$ is small enough. Since $u_0 \in \L_p$ and $S$ is strongly
  continuous on $L_p$, it is clear that we can (and will) assume,
  without loss of generality, that $u_0=0$. Then
  \[
  \big\|\mathfrak{F}(\phi)\big\|_{\H_p} \leq 
  \big\| S \ast \tilde{f}_\lambda(\cdot,\phi) \big\|_{\H_p} +
  \big\|W_A\big\|_{\H_p}.
  \]
  By a maximal inequality for stochastic convolutions we have
  \[
  \|W_A\|^p_{\H_p} \lesssim \E\int_0^T \|B(t)\|^p_{\gamma(H \to E)}\,dt,
  \]
  where the right-hand side is finite by assumption. Moreover,
  Jensen's inequality and strong continuity of $S$ on $L_p$ yield
  \[
  \E\sup_{t \leq T} \Big\|
  \int_0^t S(t-s) \tilde{f}_\lambda(s,\phi(s))\,ds \Big\|_E^p
  \lesssim_T \E\sup_{t \leq T} \big\| \tilde{f}_\lambda(t,\phi(t)) \big\|_E^p.
  \]
  Since $\|f_\lambda\|_{\lip} \leq 2/\lambda$, we have
  \[
  | \tilde{f}_\lambda(t,x) - \tilde{f}_\lambda(t,y) | = 
  | f_\lambda(x+G_A(t)) - f_\lambda(y+G_A(t)) | \leq
  \frac{2}{\lambda} |x - y|,
  \]
  hence
  \begin{align*}
    | \tilde{f}_\lambda(t,x) | &\leq | \tilde{f}_\lambda(t,x) -
    \tilde{f}_\lambda(t,0) | + | \tilde{f}_\lambda(t,0) |\\
    &\leq \frac{2}{\lambda} |x| + | f_\lambda(G_A(t)) | \leq
    \frac{2}{\lambda} |x| + \frac{2}{\lambda}|G_A(t)|,
  \end{align*}
  thus also
  \[
  \E\sup_{t \leq T} \big\| \tilde{f}_\lambda(t,\phi(t)) \big\|_E^p
  \lesssim_\lambda \E\sup_{t \leq T} \|\phi(t)\|_E^p 
  + \E\sup_{t \leq T} \|G_A(t)\|_E^p,
  \]
  where the right-hand side is finite because of (\ref{eq:BJ}) and
  because $G \in \LL^m_p$ by hypothesis. We have thus proved that
  $\mathfrak{F}(\H_p) \subseteq \H_p$.
  Since $x \mapsto \tilde{f}_\lambda(t,x,\omega)$ is Lipschitz
  continuous, uniformly over $t \in [0,T]$ and $\omega \in \Omega$,
  analogous computations show that $\mathfrak{F}$ is Lipschitz on
  $\H_p$, with a Lipschitz constant that depends continuously on
  $T$. Choosing $T=T_0$, for a small enough $T_0$ such that
  $\mathfrak{F}$ is a contraction, and then covering the interval
  $[0,T]$ by intervals of lenght $T_0$, one obtains the desired
  existence and uniqueness of a fixed point of $\mathfrak{F}$ in a
  standard way.
\end{proof}

\begin{rmk}
  (i) Note that we have assumed the more natural condition $G \in
  \LL^m_p$ for the well-posedness of the regularized equation
  (\ref{eq:reg}) rather than $G \in \LL^m_{p^*}$. Let us show that the
  latter condition also ensures that $\|G_A\|_{\H_p}$ is finite: since
  $D$ has finite Lebesgue measure and $p^*=p^2/2 \geq p$, H\"older's
  inequality implies
  \[
  \E \sup_{t \leq T} \|G_A(t)\|^p_{L_p} \lesssim_{D}
  \E \sup_{t \leq T} \|G_A(t)\|^p_{L_{p^*}}
  \leq \Big(\E\sup_{t \leq T} \|G_A(t)\|^{p^*}_{L_{p^*}}\Big)^{2/p} < \infty.
  \]

  \noindent
  (ii) The previous existence and uniqueness result also follows by an
  adaptation of \cite[Thm.~6.2]{vNVW}, which is a more general and
  more precise result about well-posedness for equations with Wiener
  noise and Lipschitz coefficients. In \cite{vNVW} the nonlinearity in
  the drift is Lipschitz continuous and satisfies a linear growth
  condition with a constant that does not depend on $t$ and $\omega$,
  hence it does not apply directly to our situation. A reasoning
  completely analogous to the above one permits however to circumvent
  this problem.
\end{rmk}

We shall need the following a priori estimate for the solution to the
regularized equation (\ref{eq:reg}).
\begin{lemma}
  Assume that $u_0 \in \L_p$, $B \in \LL^\gamma_p$ and $G \in
  \LL^m_{p^*}$. Then there exists a constant $N$, independent of
  $\lambda$, such that
  \[
  \E\sup_{t \leq T} \| u_\lambda(t) \|_E^p \leq N\big(1 + \E\|u_0\|_E^p\big).
  \]
\end{lemma}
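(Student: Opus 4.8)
The plan is to work with the ``hidden-jumps'' reformulation already introduced, i.e. to estimate $v_\lambda = u_\lambda - G_A$ via the mild equation \eqref{eq:vl}, and then to recover the estimate for $u_\lambda$ from $u_\lambda = v_\lambda + G_A$ together with the Bichteler--Jacod inequality \eqref{eq:BJ} and the H\"older trick in Remark (i) above, which is precisely why the hypothesis $G \in \LL^m_{p^*}$ rather than $G \in \LL^m_p$ enters. So the real content is an a priori bound $\E\sup_{t\le T}\|v_\lambda(t)\|_E^p \lesssim 1 + \E\|u_0\|_E^p$, uniform in $\lambda$. The key point is that although $\tilde f_\lambda$ is only Lipschitz with a constant blowing up like $2/\lambda$, it inherits the \emph{monotonicity} of $f$: for each fixed $t,\omega$ the map $y \mapsto \tilde f_\lambda(t,y) = f_\lambda(y + G_A(t))$ is monotone increasing, since $f_\lambda$ is. This monotonicity, not the Lipschitz bound, is what will give a $\lambda$-independent estimate.

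First I would pass from the mild form \eqref{eq:vl} to a pathwise (strong, or variational) formulation: for fixed $\omega$, the process $v_\lambda - W_A$ solves a deterministic evolution equation
\[
\frac{d}{dt}\big(v_\lambda(t) - W_A(t)\big) + A\big(v_\lambda(t) - W_A(t)\big) + A W_A(t) + \tilde f_\lambda(t, v_\lambda(t)) = 0,
\]
i.e. writing $w_\lambda := v_\lambda - W_A$, one has $w_\lambda' + A w_\lambda + \tilde f_\lambda(t, w_\lambda + W_A) = -A W_A$ with $w_\lambda(0) = u_0$. Then I would test this equation against the duality map $J_E(w_\lambda) \in E' = L_{p'}$ (up to the usual regularization/Yosida-of-$A$ argument needed to justify the chain rule $\frac{1}{p}\frac{d}{dt}\|w_\lambda\|_E^p = \langle w_\lambda', J_E(w_\lambda)\rangle$ when $w_\lambda$ is only a mild solution). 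The $m$-accretivity of $A$ on $E = L_p$ kills the term $\langle A w_\lambda, J_E(w_\lambda)\rangle \ge 0$. The nonlinear term is handled by monotonicity: write
\[
\langle \tilde f_\lambda(t, w_\lambda + W_A), J_E(w_\lambda)\rangle = \langle f_\lambda(w_\lambda + W_A + G_A), J_E(w_\lambda)\rangle,
\]
and compare with $f_\lambda$ evaluated at $W_A + G_A$; since $J_E(w_\lambda)$ acts pointwise with the sign of $w_\lambda$, monotonicity of $f_\lambda$ gives $\langle f_\lambda(w_\lambda + W_A + G_A) - f_\lambda(W_A+G_A), J_E(w_\lambda)\rangle \ge 0$, leaving only $-\langle f_\lambda(W_A + G_A), J_E(w_\lambda)\rangle$, which is bounded using the polynomial growth $|f_\lambda(r)| \le |f(r)| \lesssim 1 + |r|^{p/2}$ (note $f_\lambda$ satisfies the same growth bound as $f$) by $(1 + \|W_A + G_A\|_{L_{p^*}}^{p/2} \cdot \text{const}) \cdot \|w_\lambda\|_E^{p-1}$ via H\"older with exponents $p$ and $p'$. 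Finally the term $\langle A W_A, J_E(w_\lambda)\rangle$ I would treat by duality together with the analyticity of $S$ (so that $A^{1/2} W_A$, or a suitable fractional power, is controlled), bounding it by $\|W_A\|$-norms times $\|w_\lambda\|_E^{p-1}$; alternatively one keeps $W_A$ inside the argument of $A$ but integrates by parts in the convolution so that only $S$ acts on $B\,dW$ and no derivative falls on the noise. Collecting terms yields a differential inequality $\frac{d}{dt}\|w_\lambda(t)\|_E^p \lesssim \big(1 + \|W_A(t)\|_{L_{p^*}}^{p^*} + \|W_A(t)\|_E^p + \text{(terms in } G_A)\big) \big(1 + \|w_\lambda(t)\|_E^p\big)$, and Gronwall's lemma plus taking $\sup_{t\le T}$ and then $\E$ closes the estimate, the right-hand side being finite by the maximal inequality for $W_A$ (which needs $B \in \LL^\gamma_{p^*}$, hence the use of the H\"older trick to downgrade this to $B \in \LL^\gamma_p$ as claimed in the theorem's remark) and by \eqref{eq:BJ} for $G_A$ combined with the $p^*$ hypothesis.

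The main obstacle is the rigorous justification of the energy identity: $v_\lambda$ (equivalently $w_\lambda$) is a priori only a \emph{mild} $E$-solution, so one cannot directly apply $\frac{d}{dt}\|\cdot\|_E^p$ and pair with $J_E$. The standard remedy is to work with the Yosida approximation $A_\mu = \mu A(\mu + A)^{-1}$ of $A$ (or the resolvent regularization $w_\lambda^\mu := (\mu + A)^{-1}\mu\, w_\lambda$), for which the solution is strong and the chain rule is legitimate, derive the estimate with constants independent of $\mu$ (here $m$-accretivity of $A_\mu$ on $E$ and the fact that the resolvents are contractions on $L_p$ and commute with the structure are what is needed), and then let $\mu \to 0$. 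One must also check that the duality-map pairings survive this limit, which is routine but must be stated. A secondary technical point is the term with $A$ acting on $W_A$: keeping track of it requires the analyticity assumption on $S$ and a fractional-powers bound $\|A^\alpha W_A\|$-type estimate, and one should be slightly careful that the resulting time singularity $t^{-\alpha}$ is integrable, which it is for $\alpha < 1$.
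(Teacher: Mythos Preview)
Your plan diverges from the paper's in exactly the place that matters, and this creates a real gap under the stated hypotheses. You subtract \emph{both} stochastic convolutions, setting $w_\lambda = v_\lambda - W_A = u_\lambda - G_A - W_A$, and then argue pathwise on the resulting deterministic equation. The paper deliberately does \emph{not} do this: it only subtracts $G_A$, keeps the Wiener noise in the equation for $v_\lambda$, and applies It\^o's formula for $\|\cdot\|_E^p$ (in the sense of \cite{vNZ}) directly to the stochastic equation, after a resolvent regularization $(I+\varepsilon A)^{-1}$ to make the solution strong.

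The reason this matters is the integrability required of $W_A$. In your monotonicity step you compare $f_\lambda(w_\lambda + W_A + G_A)$ with $f_\lambda(W_A + G_A)$, and then bound $\|f_\lambda(W_A+G_A)\|_{L_p}^p \lesssim 1 + \|W_A+G_A\|_{L_{p^*}}^{p^*}$. After Gronwall, taking $\sup$ and expectation, this forces $\E\sup_{t\le T}\|W_A(t)\|_{L_{p^*}}^{p^*} < \infty$, which is precisely the condition $B \in \LL^\gamma_{p^*}$. Your parenthetical remark that ``the H\"older trick'' downgrades this to $B \in \LL^\gamma_p$ is backwards: H\"older on a bounded domain lets you pass from a stronger norm to a weaker one, not the reverse. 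The Remark after the theorem says exactly that the pathwise argument needs $B \in \LL^\gamma_{p^*}$, and that the point of the present proof is to weaken this to $B \in \LL^\gamma_p$.

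The paper achieves the weaker hypothesis because, with It\^o's formula applied to the SDE for $v_\lambda$, the Wiener contribution appears only through the stochastic integral $\int \psi'(v_\lambda)B\,dW$ and a remainder term, both of which are controlled by Burkholder-type estimates involving $\|B\|_{\LL^\gamma_p}$ alone. The monotonicity comparison is then made against $f_\lambda(G_A)$ (not $f_\lambda(W_A+G_A)$), so only $\|G_A\|_{L_{p^*}}$ enters, explaining why $G \in \LL^m_{p^*}$ is the hypothesis that cannot (yet) be relaxed. As a side benefit, the paper's route never produces the term $\langle A W_A, J_E(w_\lambda)\rangle$ that you propose to handle via analyticity and fractional powers; that detour is simply absent.
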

\begin{proof}
  Let $v_\lambda$ be the mild $E$-solution to (\ref{eq:vl}). For
  $\varepsilon>0$, set
  \begin{align*}
  \ep{u}_0 &:=(I+\varepsilon A)^{-1}u_0,&
  \ep{B}(t) &:= (I+\varepsilon A)^{-1} B(t),\\
  g_\lambda(t) &:= \tilde{f}_\lambda(t,v_\lambda),&
  \ep{g}_\lambda(t) &:= (I+\varepsilon A)^{-1} g_\lambda(t),
  \end{align*}
  and let $\ep{w}_\lambda$ be the mild $E$-solution to
  \[
  d\ep{w}_\lambda + Aw_\lambda\,dt + \ep{g}_\lambda\,dt = \ep{B}\,dW,
  \qquad \ep{w}_\lambda(0)=\ep{u}_0,
  \]
  that is
  \[
  \ep{w}_\lambda(t) = S(t)\ep{u}_0 
  - \int_0^t S(t-s) \ep{g}_\lambda(s)\,ds
  + \int_0^t S(t-s) \ep{B}(s)\,ds
  \]
  for all $t \in [0,T]$. It is easily seen that $\ep{w}_\lambda$ is a
  strong solution, i.e. that one has
  \[
  \ep{w}_\lambda(t) + \int_0^t \big( A\ep{w}_\lambda(s)
  + \ep{g}_\lambda(s)\big)\,ds = \ep{u}_0 + \int_0^t B(s)\,dW(s)
  \]
  for all $t \in [0,T]$, and that $\ep{w}_\lambda=(I+\varepsilon
  A)^{-1}v_\lambda \to v_\lambda$ in $\H_p$ as $\varepsilon \to 0$. We
  are going to apply It\^o's formula (in particular we shall use the
  version in \cite[Thm.~3.1]{vNZ}) to obtain estimates for
  $\|\ep{w}_\lambda\|_E^p$. To this purpose, we have to check that
  \[
  \E \Big( \int_0^T \|b(t)\|_E\,dt \Big)^p < \infty,
  \]
  where $b:=A\ep{w}_\lambda + \ep{g}_\lambda$. One has
  \[
  \E \Big( \int_0^T \|b(t)\|_E\,dt \Big)^p \lesssim
  \E \int_0^T \|b(t)\|_E^p\,dt \lesssim
  \E \int_0^T \|A\ep{w}_\lambda\|_E^p\,dt
  + \E \int_0^T \|\ep{g}_\lambda\|_E^p\,dt,
  \]
  where
  \[
  \|A\ep{w}_\lambda\|_E = \|A(I+\varepsilon A)^{-1}v_\lambda\|_E
  \lesssim_\varepsilon \|v_\lambda\|_E
  \]
  and
  \[
  \|\ep{g}_\lambda\|_E = 
  \| (I+\varepsilon A)^{-1} f_\lambda(v_\lambda+G_A) \|_E
  \leq \| f_\lambda(v_\lambda+G_A) \|_E \lesssim_\lambda
  \|v_\lambda\|_E + \|G_A\|_E,
  \]
  hence
  \[
  \E \Big( \int_0^T \|b(t)\|_E\,dt \Big)^p \lesssim_{\lambda,\varepsilon,T}
  \| v_\lambda \|_{\H_p}^p + \| G_A \|_{\H_p}^p < \infty,
  \]
  which justifies applying It\^o's formula. Setting
  $\psi(x):= \|x\|_E^p$, we have
  \[
  \psi(\ep{w}_\lambda) + \int_0^t
  \ip{A\ep{w}_\lambda+\ep{g}_\lambda}{\psi'(\ep{w}_\lambda)}\,ds =
  \int_0^t \psi'(\ep{w}_\lambda)B(s)\,dW(s) + R(t),
  \]
  where $R$ is a ``remainder'' term, the precise definition of which
  is given in \cite{vNZ}.
  Note that $\psi(u)=\big(\|u\|_E^2\big)^{p/2}$ and $\psi'(u) =
  p|u|^{p-2}u=p \|u\|_E^{p-2}J(u)$, where $J$ is the duality mapping
  of $E$,
  \[
  J: u \mapsto u|u|^{p-2}\|u\|_E^{2-p},
  \]
  i.e. $J$ is the G\^ateaux (and Fr\'echet) derivative of
  $\|\cdot\|_E^2/2$. Since $A$ is $m$-accretive on $E$, it holds
  \[
  \ip{A\ep{w}_\lambda}{\psi'(\ep{w}_\lambda)} =
  p \|\ep{w}_\lambda\|_E^{p-2}
  \ip{A\ep{w}_\lambda}{J(\ep{w}_\lambda)} \geq 0.
  \]
  Moreover, there exists $\delta>0$ and $N=N(\delta)>0$ such that
  (cf. \cite{vNZ})
  \[
  \E\sup_{t \leq T} |R(t)| \leq \delta \E\sup_{t\leq T}
  \|\ep{w}_\lambda(t)\|_E^p + N \E\Big( \int_0^T
  \|B(s)\|^2_{\gamma(H \to E)}\,ds\Big)^{p/2}
  \]
  and, by some calculations based on Young's and Burkholder's
  inequalities,
  \begin{multline*}
  \E\sup_{t\leq T} \Big| \int_0^t \psi'(\ep{w}_\lambda(s))B(s)\,dW(s)
  \Big| \\
  \lesssim \delta \E\sup_{t\leq T} \|\ep{w}_\lambda(t)\|_E^p +
  N \E\Big( \int_0^T \|B(s)\|^2_{\gamma(H \to E)}\,ds\Big)^{p/2}.
  \end{multline*}
  We thus arrive at the estimate
  \[
  \E\sup_{t \leq T} \|\ep{w}_\lambda\|_E^p \lesssim \delta
  \E\sup_{t\leq T} \|\ep{w}_\lambda\|_E^p + \|B\|^p_{\LL_p^\gamma} +
  \E\sup_{t\leq T} \int_0^t 
  \ip{-\ep{g}_\lambda}{\ep{w}_\lambda|\ep{w}_\lambda|^{p-2}}\,ds.
  \]
  Letting $\varepsilon \to 0$, we are left with
  \[
  \E\sup_{t \leq T} \|v_\lambda\|_E^p \lesssim \delta
  \E\sup_{t\leq T} \|v_\lambda\|_E^p + \|B\|^p_{\LL_p^\gamma} +
  \E\sup_{t\leq T} \int_0^t
  \bip{-\tilde{f}_\lambda(s,v_\lambda(s))}{\psi'(v_\lambda(s))}\,ds.
  \]
  Note that we have
  \[
  \ip{\tilde{f}_\lambda(t,v_\lambda)}{\psi'(v_\lambda)} =
  p\|v_\lambda\|_E^{p-2}
  \ip{\tilde{f}_\lambda(t,v_\lambda)}{J(v_\lambda)} =
  p \|v_\lambda\|_E^{p-2}
  \ip{f_\lambda(G_A+v_\lambda)}{J(v_\lambda)},
  \]
  where, by accretivity of $f_\lambda$,
  \begin{align*}
    \ip{f_\lambda(G_A+v_\lambda)}{J(v_\lambda)} &=
    \ip{f_\lambda(G_A+v_\lambda)-f(G_A)}{J(G_A+v_\lambda-G_A)}
    + \ip{f_\lambda(G_A)}{J(v_\lambda)}\\
    &\geq \ip{f_\lambda(G_A)}{J(v_\lambda)},
  \end{align*}
  hence, recalling that $\psi'(u)=p u|u|^{p-2}$,
  \[
  \ip{\tilde{f}_\lambda(t,v_\lambda)}{\psi'(v_\lambda)} \geq p
  \ip{f_\lambda(G_A)}{v_\lambda|v_\lambda|^{p-2}},
  \]
  and, by Young's inequality with conjugate exponents $p$ and
  $p'=p/(p-1)$,
  \[
  \ip{f_\lambda(G_A)}{v_\lambda|v_\lambda|^{p-2}} \lesssim
  N \|f_\lambda(G_A)\|^p_{L_p} + \delta
  \|v_\lambda|v_\lambda|^{p-2}\|^{p'}_{L_{p'}} = N \|f_\lambda(G_A)\|^p_{L_p}
  + \delta \|v_\lambda\|^p_{L_p},
  \]
  so that
  \begin{align*}
    \E\sup_{t\leq T} \Big| \int_0^t
    \ip{\tilde{f}_\lambda(t,v_\lambda)}{\psi'(v_\lambda)}\,ds \Big|
    &\lesssim \delta \E\sup_{t\leq T} \|v_\lambda(t)\|_E^p
    + N \E\sup_{t \leq T} \|f_\lambda(G_A(t))\|_E^p\\
    &\lesssim 1 + \delta \E\sup_{t\leq T} \|v_\lambda(t)\|_E^p +
    N \|G\|_{\mathcal{L}_{p^*}},
  \end{align*}
  where the last constant does not depend on $\lambda$.  

  Combining the above estimates and choosing $\delta$ small enough, we
  are left with
  \[
  \E\sup_{t \leq T} \|v_\lambda\|_E^p \lesssim 1 + \E\|u_0\|_E^p
  + \|G\|^{p^*}_{\LL_{p^*}} + \|B\|^p_{\LL_p^\gamma},
  \]
  with implicit constant independent of $\lambda$.
\end{proof}

Thanks to the a priori estimate just established, we are now going to
show that $\{u_\lambda\}_\lambda$ is a Cauchy sequence in
$\mathbb{H}_2$, hence that there exists $u \in \mathbb{H}_2$ such that
$u_\lambda \to u$ in $\mathbb{H}_2$ as $\lambda \to 0$.  In
particular, we have
\[
d(u_\lambda-u_\mu) + A(u_\lambda-u_\mu)\,dt 
+ (f_\lambda(u_\lambda) - f_\mu(u_\mu))\,dt = 0,
\]
from which we obtain, using the same argument as in
\cite[pp.~1539-1540]{cm:EJP10},
\begin{align*}
\E \sup_{t \leq T} \|u_\lambda-u_\mu\|_{L_2}^2 &\lesssim_T (\lambda+\mu)
\big( \E\sup_{t \leq T} \|f_\lambda(u_\lambda(t))\|_{L_2}^2
+ \E\sup_{t \leq T} \|f_\mu(u_\mu(t))\|_{L_2}^2 \big)\\
&\lesssim (\lambda+\mu) \big( 1 + \E\sup_{t \leq T} \|u_\lambda(t)\|_{L_p}^p
+ \E\sup_{t \leq T} \|u_\mu(t)\|_{L_p}^p \big).
\end{align*}
Since
\[
\| u_\lambda \|_{\H_p} \leq \| v_\lambda \|_{\H_p} + 
\| G_A \|_{\H_p}
\]
and $\|G_A\|_{\H_p}$ is finite because $G \in \LL^m_{p^*}$, we conclude that
$\E\sup_{t \leq T}\|u_\lambda(t)\|_E^p$ is bounded uniformly over
$\lambda$, hence that there exists $u \in \H_2$ such that $u_\lambda
\to u$ in $\H_2$ as $\lambda \to 0$.

As in \cite{cm:EJP10}, one can now pass to the limit as $\lambda \to
0$ in (\ref{eq:regm}), concluding that $u$ is indeed a mild solution
of (\ref{eq:musso}). Since $\E\sup_{t\leq T}\|u\|_{L_p}^p < \infty$,
one also gets that $f(u) \in L_1([0,T] \to H)$, hence, by the
uniqueness results in \cite{cm:IDAQP10}, $u$ is the unique c\`adl\`ag
mild solution belonging to $\H_2$.

\subsection{Proof of Theorem \ref{thm:gen}}
We need the following lemma, whose proof is completely analogous to
the proof of \cite[Lemma~9]{cm:EJP10}, hence omitted.
\begin{lemma}
  Assume that $u_{01}$, $u_{02} \in \L_p$; $B_1$, $B_2 \in
  \LL^\gamma_p$; $G_1$, $G_2 \in \LL^m_{p^*}$, and denote the unique
  c\`adl\`ag mild solutions of
  \[
  du + Au\,dt + f(u)\,dt = B_1\,dW + \int_Z G_1\,d\m, \qquad u(0)=u_{01}
  \]
  and
  \[
  du + Au\,dt + f(u)\,dt = B_2\,dW + \int_Z G_2\,d\m, \qquad u(0)=u_{02}
  \]
  by $u_1$ and $u_2$, respectively. Then one has
  \begin{multline}
    \label{eq:genlip}
    \E\sup_{t \leq T} \|u_1(t) - u_2(t)\|_H^2 \lesssim_T
    \E\|u_{01}-u_{02}\|_H^2 \\
    + \E\int_0^T \|B_1(t)-B_2(t)\|^2_{\gamma(H \to H)}\,dt
    + \E\int_0^T\!\!\int_Z \|G_1(t,z)-G_2(t,z)\|^2_H\,m(dz)\,dt.
  \end{multline}
\end{lemma}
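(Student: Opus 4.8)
The plan is to mimic the scheme of the a priori estimate above (and of \cite[Lemma~9]{cm:EJP10}): put $w:=u_1-u_2$, apply It\^o's formula to $\|w\|_H^2$, discard the two nonnegative drift terms produced by the accretivity of $A$ on $H$ and the monotonicity of $f$, and close the estimate by a Burkholder-type maximal inequality. Since $u_1,u_2\in\H_2$ are the mild solutions furnished by Theorem~\ref{thm:main}, the argument in its proof gives $\E\sup_{t\le T}\|u_i(t)\|_{L_p}^p<\infty$; together with $|f(r)|\lesssim 1+|r|^{p/2}$ and $|D|<\infty$ this yields $f(u_i)\in\L_2(L_1([0,T]\to H))$, so that $w$ is a well-defined mild solution of
\[
dw + Aw\,dt + (f(u_1)-f(u_2))\,dt = (B_1-B_2)\,dW + \int_Z(G_1-G_2)\,d\m, \qquad w(0)=u_{01}-u_{02}.
\]
Unlike the Cauchy estimate used in the proof of Theorem~\ref{thm:main}, here the noise coefficients of the two equations differ, so this difference equation is genuinely stochastic and It\^o's formula, rather than a pathwise monotonicity argument, is called for.

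Since $w$ is only a mild solution, I would first regularize as in the a priori estimate: for $\varepsilon>0$ set $w^\varepsilon:=(I+\varepsilon A)^{-1}w$, which is the strong solution of the equation obtained by applying $(I+\varepsilon A)^{-1}$ to the initial datum and to the three coefficients, and which converges to $w$ in $\H_2$ as $\varepsilon\to0$. The bounds $\|Aw^\varepsilon\|_H\lesssim_\varepsilon\|w\|_H$ and $\|(I+\varepsilon A)^{-1}(f(u_1)-f(u_2))\|_H\le\|f(u_1)\|_H+\|f(u_2)\|_H$ put the drift of $w^\varepsilon$ in $\L_2(L_1([0,T]\to H))$, so It\^o's formula (the classical Hilbert-space version, a special case of \cite[Thm.~3.1]{vNZ} for the quadratic $x\mapsto\|x\|_H^2$) applies and, after dropping the nonnegative term $2\int_0^t\langle Aw^\varepsilon,w^\varepsilon\rangle\,ds$, yields
\[
\|w^\varepsilon(t)\|_H^2 + 2\int_0^t\bip{(I+\varepsilon A)^{-1}(f(u_1)-f(u_2))}{w^\varepsilon}\,ds \le \|w^\varepsilon(0)\|_H^2 + M^\varepsilon(t) + \int_0^t\|(I+\varepsilon A)^{-1}(B_1-B_2)\|^2_{\gamma(H\to H)}\,ds + \int_0^t\!\!\int_Z\|(I+\varepsilon A)^{-1}(G_1-G_2)\|_H^2\,\mu(dz,ds),
\]
where $\mu$ is the underlying uncompensated Poisson random measure (with compensator $\mathrm{Leb}\otimes m$, so $\m=\mu-\mathrm{Leb}\otimes m$) and $M^\varepsilon$ collects the two martingale terms $2\int_0^t\langle w^\varepsilon,(I+\varepsilon A)^{-1}(B_1-B_2)\,dW\rangle$ and $2\int_0^t\int_Z\langle w^\varepsilon(s-),(I+\varepsilon A)^{-1}(G_1-G_2)\rangle\,\m(ds,dz)$. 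Letting $\varepsilon\to0$, using the convergences $w^\varepsilon\to w$ in $\H_2$, $(I+\varepsilon A)^{-1}(B_1-B_2)\to B_1-B_2$ in $\LL^\gamma_2(H\to H)$, $(I+\varepsilon A)^{-1}(G_1-G_2)\to G_1-G_2$ in $\LL^m_2$, $(I+\varepsilon A)^{-1}(f(u_1)-f(u_2))\to f(u_1)-f(u_2)$ in $\L_2(L_1([0,T]\to H))$ (whence, along a subsequence, convergence of the stochastic integrals), and then dropping the nonnegative term $2\int_0^t\langle f(u_1)-f(u_2),w\rangle_H\,ds$ (nonnegative since $\langle f(u_1)-f(u_2),w\rangle_H=\int_D(f(u_1)-f(u_2))(u_1-u_2)\ge0$ by monotonicity of $f$), I arrive at the pathwise inequality obtained by formally setting $\varepsilon=0$ and deleting the $f$-term on the left.

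To finish, I would take $\sup_{t\le T}$ and then expectation. The term $\int_0^t\int_Z\|G_1-G_2\|_H^2\,\mu(dz,ds)$ is nondecreasing in $t$, so its supremum is attained at $T$ and its expectation equals $\E\int_0^T\int_Z\|G_1-G_2\|_H^2\,m(dz)\,ds$; the two martingale terms are handled by the Burkholder--Davis--Gundy and Young inequalities (after a standard localization), each being bounded by $\delta\,\E\sup_{t\le T}\|w(t)\|_H^2$ plus a constant multiple of the corresponding term on the right-hand side of \eqref{eq:genlip}. Choosing $\delta$ small enough to absorb $2\delta\,\E\sup_{t\le T}\|w(t)\|_H^2$ into the left-hand side gives \eqref{eq:genlip} (the constant produced this way is in fact independent of $T$, a fortiori of the form $\lesssim_T$).

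The one delicate point is the passage $\varepsilon\to0$, in particular the convergence of the stochastic integrals in $M^\varepsilon$ and the applicability of It\^o's formula to $w^\varepsilon$; both are routine given the a priori $L_p$-bound for $u_1,u_2$ and are carried out exactly as in the a priori estimate above and in \cite[Lemma~9]{cm:EJP10}, which is why the latter reference suppresses the details. If one prefers to bypass the limiting procedure, one may instead run the whole computation first for the regularized solutions $u_{1,\lambda},u_{2,\lambda}$ (with $f$ replaced by the monotone, globally Lipschitz $f_\lambda$, so that $w_\lambda:=u_{1,\lambda}-u_{2,\lambda}$ is an $E$-valued process and every manipulation is immediate) and then let $\lambda\to0$, using $u_{i,\lambda}\to u_i$ in $\H_2$ and the $\lambda$-uniformity of the constants.
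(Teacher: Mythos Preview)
Your argument is correct and is precisely what the paper intends: the paper omits the proof entirely, stating it is ``completely analogous to the proof of \cite[Lemma~9]{cm:EJP10}'', and your reconstruction --- It\^o's formula for $\|w\|_H^2$ after resolvent regularization, discarding the nonnegative $A$- and $f$-contributions by accretivity and monotonicity, then BDG plus Young to close --- is exactly that argument with the additional Wiener term carried along. Your alternative route via the $\lambda$-regularized solutions is equally valid and is in fact closer to how such estimates are often first derived before passing to the limit.
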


\medskip

Let us consider sequences $\{u_{0n}\}_n \subset \L_p$, $\{B_n\}_n
\subset \LL^\gamma_p$ and $\{G_n\}_n \subset \LL^m_{p^*}$ such that
$u_{0n} \to u_0$ in $\L_2$, $B_n \to B$ in $\LL^\gamma_2$ and $G_n \to
G$ in $\LL^m_2$ as $n \to \infty$. Denoting by $u_n$ the unique mild
solution in $\H_2$ of
\[
du_n + Au_n\,dt + f(u_n)\,dt = B_n\,dW + \int_Z G_n\,d\m,
\qquad u_n(0)=u_{0n},
\]
the previous lemma yields
\begin{multline*}
  \E\sup_{t \leq T} \|u_n(t) - u_m(t)\|_H^2 \lesssim_T
  \E\|u_{0n}-u_{0m}\|_H^2 \\
  + \E\int_0^T \|B_n(t)-B_m(t)\|^2_{\gamma(H \to H)}\,dt +
  \E\int_0^T\!\!\int_Z \|G_n(t,z)-G_m(t,z)\|^2_H\,m(dz)\,dt,
\end{multline*}
i.e. $\{u_n\}_n$ is a Cauchy sequence in $\H_2$. This implies that
$u_n \to u$ in $\H_2$ as $n \to \infty$, and $u$ is a generalized
solution of (\ref{eq:musso}). Since the limit does not depend on the
choice of $u_{0n}$, $B_n$ and $G_n$, the generalized solution is
unique.

\bibliographystyle{amsplain}
\bibliography{ref.bib}

\end{document}